\newcommand{\bbR}{\mathbb{R}}
\newtheorem{thm}{Theorem}
\newtheorem{lem}[thm]{Lemma}
\subjclass[2020]{Primary 54E40; Secondary 03E50}
\keywords{Plastic subset, Continuum Hypothesis}
\title[A solution to the problem of the existence\ldots]{A solution to the problem of the existence of a dense plastic subset $X\subseteq\bbR$ of cardinality $|X|<\frak{c}$}
\author{Wojciech Bielas}
\newcommand{\bbQ}{\mathbb{Q}}
\newcommand{\calR}{\mathcal{R}}
\newcounter{czesc}
\renewcommand{\geq}{\geqslant}
\renewcommand{\leq}{\leqslant}
\begin{document}
\maketitle

\begin{abstract}
  Under the assumption $\frak{c}\geqslant \omega_2$,  we give an example of a dense plastic subset $X\subseteq\bbR$ of cardinality $|X|<\frak{c}$.
  This answers Problem 1 of \cite{Banakh}.
\end{abstract}

\section{Introduction}
For a metric space $(X,d)$,  we say that $f\colon X\to X$ is \emph{non-expansive} if $d(f(x),f(y))\leq d(x,y)$ for all $x,y\in X$.
A metric space $X$ is \emph{plastic} if its every non-expansive bijection 
$f\colon X\to X$ is an isometry.
A subset $Y$ of a metric space is \emph{plastic} if it is plastic as a subspace.

In \cite{Banakh} Authors have proved, among others, that every countable dense subspace of a normed space is not plastic.
This implies that, under the Continuum Hypothesis, every dense subset $X\subseteq\bbR$ of cardinality $|X|<\frak{c}$ is not plastic.
In Problem 1 of \cite{Banakh}, Authors ask whether the Continuum Hypothesis is a necessary assumption.
Under the assumption $\frak{c}\geq\omega_2$, we construct a dense plastic subset $X\subseteq\bbR$ of cardinality $|X|=\omega_1$.
The subset $X$ has a stronger property: the identity map is its only non-expansive bijection.

\section{The construction}
Let $\calR=\{[a_n,b_n]\colon n<\omega\}$ be a pairwise disjoint family of intervals of $\bbR$ such that for every $m,n<\omega$:
 \begin{enumerate}
 \item $a_n,b_n\in\bbQ$,
 \item $2(b_{n+1}-a_{n+1})<b_n-a_n$,
 \item if  $b_m<a_n$, then $a_n-b_m>b_0-a_0$,
 \item $a_0<a_1$,
 \item for every $x\in\bbR$ there exist $k$ and $l$ such that $b_k<x<a_l$.
 \end{enumerate}
 Condition (ii) says that the distance between different intervals of $\calR$ is at least $b_0-a_0$.
 It follows that for each $n$ there exists a unique $m$ such that:
 \begin{itemize}
 \item for every $x\in [a_n,b_n]$ and $y\in [a_m,b_m]$ we have $x<y$,
 \item $(b_n,a_m)\cap\bigcup\calR=\emptyset$.
 \end{itemize}
 In that case let $A_n\subseteq(b_n,a_m)$ be such that $|A_n\cap U|=\omega_1$ for every non-empty open $U\subseteq (b_n,a_m)$.
 We define
 $$X=\bigcup_{n<\omega}((\bbQ\cap[a_n,b_n])\cup A_n).$$

\section{Basic properties of the set $X$}
For the rest of this section we fix a non-expansive bijection $f\colon X\to X$ and define for every $n<\omega$:
$$\delta_n=b_n-a_n,\quad a'_n=a_n+\delta_{n+1},\quad b'_n=b_n-\delta_{n+1}.$$

Let us recall that $x\in\bbR$ is a \emph{condensation} point of a subset $A\subseteq\bbR$ if $|A\cap U|\geq\omega_1$ for every open neighborhood $U$ of $x$.
Thus, if $x$ is a condensation point of $X$, then $f(x)$ is also a condensation point of $X$.
From now on, when we say a condensation point, we will mean a condensation point of $X$.
Observe that for every $n<\omega$:
\begin{itemize}
\item $a_n$ and $b_n$ are condensation points,
\item there is no condensation point $x$ such that $|x-a'_n|<\delta_{n+1}$ or $|x-b'_n|<\delta_{n+1}$,
\item neither $f^{-1}(a'_n)$ nor $f^{-1}(b'_n)$ is a condensation point.
\end{itemize}
 \begin{lem}\label{lem1}
For every $n<\omega$,    if $f^{-1}(a'_n),f^{-1}(b'_n)\in [a'_n,b'_n],$
then
$$\{f^{-1}(a'_n),f^{-1}(b'_n)\}=\{b'_n,a'_n\}.$$
 \end{lem}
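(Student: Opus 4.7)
\medskip

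\textbf{Proof plan.} The plan is a direct pigeonhole-style length computation: the interval $[a'_n,b'_n]$ has length equal to the distance $|a'_n-b'_n|$, and non-expansiveness of $f$ forces the preimages of the endpoints to be at least that far apart, leaving no room to sit anywhere but at the endpoints.

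\medskip

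\textbf{Step 1: compute the length of $[a'_n,b'_n]$.} First I would note that by the definitions,
\[
b'_n-a'_n \;=\; (b_n-\delta_{n+1})-(a_n+\delta_{n+1}) \;=\; \delta_n-2\delta_{n+1},
\]
which is strictly positive by condition~(ii) of the construction. So $[a'_n,b'_n]$ is a non-degenerate interval of length $\delta_n-2\delta_{n+1}$.

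\medskip

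\textbf{Step 2: lower-bound the distance between the preimages.} Because $f$ is non-expansive, for any two points $u,v\in X$ we have $|f(u)-f(v)|\leq |u-v|$. Applying this with $u=f^{-1}(a'_n)$ and $v=f^{-1}(b'_n)$ gives
\[
|a'_n-b'_n| \;=\; \bigl|f(f^{-1}(a'_n))-f(f^{-1}(b'_n))\bigr| \;\leq\; |f^{-1}(a'_n)-f^{-1}(b'_n)|.
\]
Thus the two preimages are at least $\delta_n-2\delta_{n+1}$ apart.

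\medskip

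\textbf{Step 3: conclude by matching lengths.} By the hypothesis of the lemma, both $f^{-1}(a'_n)$ and $f^{-1}(b'_n)$ lie in $[a'_n,b'_n]$, which has diameter exactly $\delta_n-2\delta_{n+1}$. Two points of an interval that are at distance at least its length must be the two endpoints. Since $f$ is a bijection the preimages are distinct, so
\[
\{f^{-1}(a'_n),f^{-1}(b'_n)\} \;=\; \{a'_n,b'_n\},
\]
which is the desired conclusion.

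\medskip

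\textbf{Main obstacle.} There is essentially no obstacle: the argument is a one-line length comparison once one writes down $b'_n-a'_n$ explicitly and invokes non-expansiveness in the form $|f(u)-f(v)|\leq|u-v|$. The only point worth checking carefully is that condition~(ii) really gives $b'_n>a'_n$ so that $[a'_n,b'_n]$ is a genuine interval, and that $a'_n,b'_n\in X$ (so that $f^{-1}$ is defined on them), which follows because $a_n,b_n,\delta_{n+1}\in\bbQ$ and $a'_n,b'_n\in[a_n,b_n]$.
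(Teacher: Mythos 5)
Your proof is correct and follows essentially the same argument as the paper: the non-expansiveness of $f$ forces $|f^{-1}(a'_n)-f^{-1}(b'_n)|\geq |a'_n-b'_n|=\delta_n-2\delta_{n+1}$, and since both preimages lie in the interval $[a'_n,b'_n]$ of exactly that length, they must be its endpoints. Your added checks (that $\delta_n-2\delta_{n+1}>0$ by condition (ii) and that $a'_n,b'_n\in X$) are fine and only make the write-up slightly more careful than the paper's.
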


 \begin{proof}
   Points $f^{-1}(a'_n)$ and $f^{-1}(b'_n)$ belong to the interval $[a'_n,b'_n]$ of length $b_n-a_n-2\delta_{n+1}$, hence
   $$|f^{-1}(a'_n)-f^{-1}(b'_n)|\leq b_n-a_n-2\delta_{n+1}.$$
Since $f$ is non-expansive,
   $$b_n-a_n-2\delta_{n+1}=|a'_n-b'_n|\leq |f^{-1}(a'_n)-f^{-1}(b'_n)|.$$
Thus the distance between points $f^{-1}(a'_n)$ and $f^{-1}(b'_n)$ equals $b_n-a_n-2\delta_{n+1}$ but the interval $[a'_n,b'_n]$ contains only one such pair: $\{a'_n,b'_n\}$.
 \end{proof}

 \begin{lem}\label{lem2}
For every $n<\omega$,   if $f(a'_n)=a'_n$, then $f(a_n)=a_n$ and $f(b_n)=b_n$, but if
 $f(a'_n)=b'_n$, then $f(a_n)=b_n$ and $f(b_n)=a_n$.
 \end{lem}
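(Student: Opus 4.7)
The plan is to leverage Lemma~\ref{lem1} to pair up $f(a'_n)$ and $f(b'_n)$, and then exploit the observation that the interior $(a_n,b_n)$ contains no condensation points of $X$. Indeed, the two alternatives in the statement of Lemma~\ref{lem2} correspond precisely to the two sub-cases in the conclusion of Lemma~\ref{lem1}: $f(a'_n)=a'_n$ is paired with $f(b'_n)=b'_n$, while $f(a'_n)=b'_n$ is paired with $f(b'_n)=a'_n$. I will therefore assume both pieces of information in each case.

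In the first case, where $f(a'_n)=a'_n$ and $f(b'_n)=b'_n$, the non-expansiveness of $f$ yields
\[|f(a_n)-a'_n|\leq |a_n-a'_n|=\delta_{n+1},\qquad |f(b_n)-b'_n|\leq\delta_{n+1},\]
so $f(a_n)\in[a_n,a_n+2\delta_{n+1}]$ and $f(b_n)\in[b_n-2\delta_{n+1},b_n]$. Condition~(ii), namely $2\delta_{n+1}<\delta_n$, guarantees that both subintervals lie strictly inside $[a_n,b_n]$, and by construction $X\cap(a_n,b_n)=\bbQ\cap(a_n,b_n)$ is countable, so no point of $(a_n,b_n)$ is a condensation point of $X$. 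Hence the only condensation points of $X$ lying in $[a_n,a_n+2\delta_{n+1}]$ and in $[b_n-2\delta_{n+1},b_n]$ are $a_n$ and $b_n$ respectively; since $f$ sends condensation points to condensation points, this forces $f(a_n)=a_n$ and $f(b_n)=b_n$.

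The second case is entirely symmetric: $|f(a_n)-b'_n|\leq\delta_{n+1}$ and $|f(b_n)-a'_n|\leq\delta_{n+1}$ confine $f(a_n)$ and $f(b_n)$ to $[b_n-2\delta_{n+1},b_n]$ and $[a_n,a_n+2\delta_{n+1}]$ respectively, and the same rigidity argument yields $f(a_n)=b_n$ and $f(b_n)=a_n$. The main delicacy, if one can call it that, is simply to ensure that the intervals supplied by non-expansiveness are strictly contained in $[a_n,b_n]$, so that they do not reach the abundance of condensation points populating the gaps immediately to the left of $a_n$ or to the right of $b_n$; condition~(ii) supplies exactly the strict inequality $2\delta_{n+1}<b_n-a_n$ needed.
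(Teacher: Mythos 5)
Your proof is correct and takes essentially the same route as the paper: non-expansiveness confines $f(a_n)$ (resp.\ $f(b_n)$) to within $\delta_{n+1}$ of the image of $a'_n$ (resp.\ $b'_n$), i.e.\ to a subinterval strictly inside $[a_n,b_n]$, and since the interior of $[a_n,b_n]$ meets $X$ only in a countable set while $f$ carries condensation points to condensation points, the endpoint is forced. The one deviation is that you assume the paired information $f(b'_n)=b'_n$ (resp.\ $f(b'_n)=a'_n$) instead of deriving it from $f(a'_n)=a'_n$ as the lemma's literal statement would require; this is harmless, since the lemma is only applied immediately after Lemma~\ref{lem1}, which supplies exactly that pairing, and the paper's own proof likewise routes through $f(b'_n)=b'_n$, asserting (without further argument) that it follows from $f(a'_n)=a'_n$.
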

 
 \begin{proof}
   Assume $f(a'_n)=a'_n$.
   We have $|a_n-a'_n|=\delta_{n+1}$, hence $|f(a_n)-f(a'_n)|\leq \delta_{n+1}$.
   Since $a_n$ is a condensation point, so is $f(a_n)$, but there is only one condensation point $x$ such that  $|x-a'_n|\leq\delta_{n+1}$, namely, $x=a_n$.
   Thus $f(a_n)=a_n$.
   From $f(a'_n)=a'_n$ it follows that $f(b'_n)=b'_n$ and we show similarly that $f(b_n)=b_n$.

   The case $f(a'_n)=b'_n$ is analogous.
 \end{proof}

 \begin{lem}\label{lem3}
For every $n<\omega$, if $\{f(a_n),f(b_n)\}=\{a_n,b_n\}$, then
   $f|_{[a_n,b_n]\cap X}$ is an isometry onto $[a_n,b_n]\cap X$.
 \end{lem}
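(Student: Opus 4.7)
The plan is to split into the two cases allowed by the hypothesis and, in each, pin down $f(x)$ for an arbitrary $x\in [a_n,b_n]\cap X$ by exploiting the collapse of the triangle inequality. Note first that $[a_n,b_n]\cap X=\bbQ\cap[a_n,b_n]$ by the definition of $X$.

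\textbf{Case 1:} $f(a_n)=a_n$ and $f(b_n)=b_n$. For any $x\in[a_n,b_n]\cap X$, non-expansiveness gives $|f(x)-a_n|\leq x-a_n$ and $|f(x)-b_n|\leq b_n-x$. Adding these and using the triangle inequality,
$$b_n-a_n=|a_n-b_n|\leq |f(x)-a_n|+|f(x)-b_n|\leq (x-a_n)+(b_n-x)=b_n-a_n.$$
Equality throughout forces $f(x)\in[a_n,b_n]$ and $|f(x)-a_n|=x-a_n$, hence $f(x)=x$.

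\textbf{Case 2:} $f(a_n)=b_n$ and $f(b_n)=a_n$. The same computation, with the roles of $a_n$ and $b_n$ swapped on the right, yields $f(x)\in[a_n,b_n]$ and $|f(x)-b_n|=x-a_n$, so $f(x)=a_n+b_n-x$.

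In both cases $f$ maps $[a_n,b_n]\cap X$ into itself: the identity on $\bbQ\cap[a_n,b_n]$ in the first case, and the reflection $x\mapsto a_n+b_n-x$ in the second, which preserves $\bbQ\cap[a_n,b_n]$ because $a_n,b_n\in\bbQ$. Both are isometries of $\bbQ\cap[a_n,b_n]$ onto itself, which gives the conclusion. The argument is essentially mechanical once the two alternatives of the hypothesis are separated; there is no substantive obstacle, the only point requiring care being the bookkeeping of the triangle inequality that simultaneously traps $f(x)$ in the interval and determines its exact position.
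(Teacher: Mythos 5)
Your proof is correct and follows essentially the same route as the paper: both pin down $f(t)$ using the two non-expansiveness inequalities against the endpoints $a_n,b_n$, the only difference being that you make the uniqueness of the solution explicit via equality in the triangle inequality, while the paper simply asserts that the system has the unique solution $t$ (resp.\ $a_n+b_n-t$).
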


 \begin{proof}
Since $\{f(a_n),f(b_n)\}=\{a_n,b_n\}$, for any $t\in [a_n,b_n]$, the system of inequalities
   $$(*)\quad\left\{\begin{array}{rcl}
  |f(a_n)-x|&\leq& |a_n-t|,\\
  |f(b_n)-x|&\leq& |b_n-t|\\
   \end{array}\right.$$
has a unique solution, which is $t$, if $f(a_n)=a_n$, and $x=a_n+b_n-t$, if $f(a_n)=b_n$.
 On the other hand, since $f$ is non-expansive, we have 
  $$(**)\quad\left\{\begin{array}{rcl}
  |f(a_n)-f(t)|&\leq& |a_n-t|,\\
  |f(b_n)-f(t)|&\leq& |b_n-t|.\\
   \end{array}\right.$$
Thus $f(t)$ is the only solution of $(*)$, hence $f(t)=t$ for all $t\in[a_n,b_n]$, if $f(a_n)=a_n$, or  $f(t)=a_n+b_n-t$ for all $t\in[a_n,b_n]$, if $f(a_n)=b_n$.
 \end{proof}

 
 \begin{lem}\label{lem4}
   For every $n<\omega$, 
$f|_{[a_n,b_n]\cap X}$ is an isometry onto $[a_n,b_n]\cap X$.
 \end{lem}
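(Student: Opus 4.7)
The plan is to chain the preceding three lemmas: if we can show $f^{-1}(a'_n),f^{-1}(b'_n)\in [a'_n,b'_n]$, then Lemma~\ref{lem1} yields $\{f^{-1}(a'_n),f^{-1}(b'_n)\}=\{a'_n,b'_n\}$; bijectivity of $f$ then gives $f(\{a'_n,b'_n\})=\{a'_n,b'_n\}$; Lemma~\ref{lem2} translates this to $\{f(a_n),f(b_n)\}=\{a_n,b_n\}$; and Lemma~\ref{lem3} closes out the argument.

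Thus the real task is to prove $f^{-1}(a'_n),f^{-1}(b'_n)\in [a'_n,b'_n]$, which I would do by induction on $n$. Under the inductive hypothesis, $f$ and $f^{-1}$ preserve each $[a_k,b_k]\cap X$ for $k<n$, so neither $f^{-1}(a'_n)$ nor $f^{-1}(b'_n)$ can land in those intervals. Since $a'_n,b'_n$ are not condensation points, neither are their preimages, placing each in some $(a_m,b_m)$ with $m\geq n$. Next, $m=n$ can be forced: if $m>n$ then $\delta_m\leq \delta_{n+1}$, so both $a_m$ and $b_m$ lie within $\delta_{n+1}$ of $f^{-1}(a'_n)$; non-expansivity then puts both $f(a_m)$ and $f(b_m)$ within $\delta_{n+1}$ of $a'_n$, and since the unique condensation point within that distance of $a'_n$ is $a_n$, injectivity is violated. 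The same argument applies to $f^{-1}(b'_n)$, so both preimages lie in $(a_n,b_n)$.

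The main obstacle is ruling out $f^{-1}(a'_n)\in (a_n,a'_n)\cup (b'_n,b_n)$. In the case $f^{-1}(a'_n)\in (a_n,a'_n)$, the bound $|a_n-f^{-1}(a'_n)|<\delta_{n+1}$ forces $|f(a_n)-a'_n|<\delta_{n+1}$, hence $f(a_n)=a_n$ by the uniqueness principle used in Lemma~\ref{lem2}. A parallel analysis of the location of $f^{-1}(b'_n)$---exploiting the lower bound $|f^{-1}(a'_n)-f^{-1}(b'_n)|\geq \delta_n-2\delta_{n+1}$ coming from non-expansivity of $f$, the uniqueness of condensation points near $b'_n$, and the diameter bound $|f(a_n)-f(b_n)|\leq \delta_n$---constrains $f(b_n)$ enough that the only consistent value is $b_n$; the remaining candidates (an endpoint of some other interval, a gap point to the left of $a_n$, or $a_n$ itself) clash with either injectivity or the diameter inequality. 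With $\{f(a_n),f(b_n)\}=\{a_n,b_n\}$ in hand, Lemma~\ref{lem3} forces $f$ to act as the identity on $[a_n,b_n]\cap X$, so $f(a'_n)=a'_n$, contradicting $f^{-1}(a'_n)\neq a'_n$. The case $f^{-1}(a'_n)\in (b'_n,b_n)$ runs symmetrically, producing $f(b_n)=a_n$, then $f(a_n)=b_n$, then the reflection on $[a_n,b_n]\cap X$ via Lemma~\ref{lem3}, and the analogous contradiction.
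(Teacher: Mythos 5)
Your skeleton is the paper's: induction on $n$, force $f^{-1}(a'_n),f^{-1}(b'_n)$ into $[a'_n,b'_n]$, then chain Lemmas \ref{lem1}--\ref{lem3}. Your handling of the case $m>n$ (both $a_m,b_m$ lie within $\delta_m\leq\delta_{n+1}$ of the preimage, so $f(a_m),f(b_m)$ are condensation points within $\delta_{n+1}$ of $a'_n$, clashing with injectivity) is a correct minor variant of the paper's. The gap is in the case $m=n$. Having derived $f(a_n)=a_n$, you reach your contradiction only through the claim that $f(b_n)$ must be $b_n$, with the other candidates dismissed by ``injectivity or the diameter inequality.'' That step is not justified: a point $p$ of the set $A_k$ lying in the gap immediately to the left of $a_n$ with $a_n-p\leq\delta_n$ is a condensation point of $X$, is distinct from $f(a_n)=a_n$, and satisfies $|f(a_n)-p|\leq\delta_n$, so neither injectivity nor the diameter bound excludes $f(b_n)=p$; and in the sub-case where $f^{-1}(b'_n)$ happens to land in $[a'_n,b'_n]$, the lower bound $|f^{-1}(a'_n)-f^{-1}(b'_n)|\geq\delta_n-2\delta_{n+1}$ and the uniqueness of condensation points near $b'_n$ give no constraint on $f(b_n)$ at all. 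So as written the case $m=n$ is not closed.

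The irony is that your own derivation already contains the contradiction, so the detour through $f(b_n)$ and Lemma \ref{lem3} is unnecessary. From $f^{-1}(a'_n)\in(a_n,a'_n)$ you get the \emph{strict} inequality $|f(a_n)-a'_n|\leq|a_n-f^{-1}(a'_n)|<\delta_{n+1}$, and $f(a_n)$ is a condensation point; but no condensation point lies at distance strictly less than $\delta_{n+1}$ from $a'_n$ (the nearest one, $a_n$, is at distance exactly $\delta_{n+1}$). Equivalently, your intermediate conclusion $f(a_n)=a_n$ is already incompatible with the strict bound, since $|a_n-a'_n|=\delta_{n+1}$. This one-line observation is exactly how the paper finishes the case $m=n$ (and its mirror image $f^{-1}(a'_n)\in(b'_n,b_n)$); replacing your final paragraph with it repairs the proof.
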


 \begin{proof}
Fix  $n$ and assume that the hypothesis holds for $k<n$.
We will show that
$$f^{-1}(a'_n),f^{-1}(b'_n)\in [a'_n,b'_n].$$
Suppose that $f^{-1}(a'_n)\notin [a'_n,b'_n]$.
By the inductive assumption, 
$$f[[a_k,b_k]\cap X]=[a_k,b_k]\cap X$$
for $k<n$.
Since $f(f^{-1}(a'_n))=a'_n\in [a_n,b_n]$, we have $f^{-1}(a'_n)\notin \bigcup_{k<n}[a_k,b_k]$.
Thus either $f^{-1}(a'_n)\in X\setminus \bigcup_{k<\omega}[a_k,b_k]$, which means that $f^{-1}(a'_n)$ is a condensation point and this is a contradiction, or $f^{-1}(a'_n)\in [a_m,b_m]\cap X$ for some $m\geq n$.
Since $a_m$ is a condensation point, so is $f(a_m)$.
Now, if $m=n$, then $f^{-1}(a'_n)\notin [a'_n,b'_n]$ implies that
$$f^{-1}(a'_n)\in [a_n,a'_n)\cup (b'_n,b_n],$$
which means that
$$\min\{|f^{-1}(a'_n)-a_n|,|b_n-f^{-1}(a'_n)|\}<\delta_{n+1}.$$
If $|f^{-1}(a'_n)-a_n|<\delta_{n+1}$, then
$$|f(a_n)-a'_n|=|f(a_n)-f(f^{-1}(a'_n))|\leq |a_n-f^{-1}(a'_n)|<\delta_{n+1},$$
but there is no condensation point $x$ such that $|x-a'_n|<\delta_{n+1}$; a contradiction.
The case $|b_n-f^{-1}(a'_n)|<\delta_{n+1}$ is analogous.
If $m>n$, then
$$\textstyle\min\{|a_m-f^{-1}(a'_n)|,|b_m-f^{-1}(a'_n)|\}\leq\frac12\delta_m.$$
If $|a_m-f^{-1}(a'_n)|\leq\frac12\delta_m$, then
$$\textstyle|f(a_m)-a'_n|=|f(a_m)-f(f^{-1}(a'_n))|\leq |a_m-f^{-1}(a'_n)|\leq\frac12\delta_{m}<\delta_{n+1};$$
a contradiction.
The case $|b_m-f^{-1}(a'_n)|\leq\frac12\delta_m$ is analogous.
Finally, $f^{-1}(a'_n)\in [a'_n,b'_n]$ and the fact that $f^{-1}(b'_n)\in [a'_n,b'_n]$ is proved similarly.

From Lemma \ref{lem1} it follows that $\{f^{-1}(a'_n),f^{-1}(b'_n)\}= \{a'_n,b'_n\}$.
By Lemma \ref{lem2}, we have $\{f(a_n),f(b_n)\}=\{a_n,b_n\}$.
Then, by Lemma \ref{lem3}, $f|_{[a_n,b_n]\cap X}$ is an isometry onto $[a_n,b_n]\cap X$.
 \end{proof}

\section{The main theorem}
 
 \begin{thm}
The set  $X$ is plastic.
\end{thm}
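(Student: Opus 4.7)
The plan is to combine Lemma \ref{lem4} with a short rigidity argument at the block boundaries to upgrade the piecewise conclusion to a global identity. Recall that by Lemma \ref{lem4} (together with Lemma \ref{lem3}) the restriction $f|_{[a_n,b_n]\cap X}$ is, for each $n<\omega$, either the identity map or the reflection $t\mapsto a_n+b_n-t$. Thus in particular $\{f(a_n),f(b_n)\}=\{a_n,b_n\}$ for every $n$.

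The first step is to rule out the reflection case on every block. Suppose for contradiction that $f(b_n)=a_n$ for some $n$, and let $m$ be the unique index associated to $n$ in the construction, so that $b_n<a_m$ and $(b_n,a_m)\cap\bigcup\calR=\emptyset$. By Lemma \ref{lem4} applied to the block $[a_m,b_m]$ we have $f(a_m)\in\{a_m,b_m\}$, and non-expansiveness of $f$ forces $|a_n-f(a_m)|=|f(b_n)-f(a_m)|\leq a_m-b_n$. But $|a_n-a_m|=(a_m-b_n)+\delta_n$ and $|a_n-b_m|=(a_m-b_n)+\delta_n+\delta_m$, and both strictly exceed $a_m-b_n$; this is a contradiction. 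Hence $f$ acts as the identity on every block $[a_n,b_n]\cap X$, and in particular $f(a_n)=a_n$ and $f(b_n)=b_n$ for all $n<\omega$.

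The second step handles the remaining points of $X$, namely those in $\bigcup_{n<\omega}A_n$. Fix $x\in A_n$ and let $m$ be the successor index of $n$ as above, so that $b_n<x<a_m$. Non-expansiveness applied to the pair $(x,b_n)$ yields $|f(x)-b_n|\leq x-b_n$, hence $f(x)\leq x$; applied to the pair $(x,a_m)$ it yields $|f(x)-a_m|\leq a_m-x$, hence $f(x)\geq x$. Thus $f(x)=x$. Combined with the first step, this gives $f=\mathrm{id}_X$, which is trivially an isometry, so $X$ is plastic.

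The main obstacle is the first step: excluding the reflection on each block. Nothing purely internal to $[a_n,b_n]$ can rule it out, so the argument must use the next block $[a_m,b_m]$; the key point exploited is that the distance $a_m-b_n$ between consecutive intervals is strictly smaller than both $a_m-a_n$ and $b_m-a_n$, so swapping $a_n$ and $b_n$ would push the image of $b_n$ farther from the (necessarily preserved) condensation points $a_m,b_m$ than non-expansiveness permits.
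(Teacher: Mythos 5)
Your proof is correct, and its overall skeleton is the paper's: Lemma \ref{lem4} reduces the problem to the endpoints, the reflection on each block must be excluded, and the remaining points are squeezed between two fixed points by the two-inequality trick. Where you genuinely differ is in the anti-reflection step. The paper proves $f(a_n)=a_n$, $f(b_n)=b_n$ by induction on the index: a base case playing $[a_0,b_0]$ against $[a_1,b_1]$ (which needs condition (ii), the strict decrease of lengths, and (iv)), followed by an inductive step comparing a putatively reflected block with an already-fixed one. Your argument is local and induction-free: if $f(b_n)=a_n$, you play block $n$ against its \emph{spatial} right neighbour $[a_m,b_m]$, using only that Lemma \ref{lem4} forces $f(a_m)\in\{a_m,b_m\}$ and that both $|a_n-a_m|=(a_m-b_n)+\delta_n$ and $|a_n-b_m|=(a_m-b_n)+\delta_n+\delta_m$ strictly exceed the original distance $a_m-b_n$; this needs nothing beyond $\delta_n,\delta_m>0$ and the existence of the neighbour $m$, which the construction supplies. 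So your version is shorter, handles each block independently, and makes visible that conditions (ii)--(iv) are only needed for the lemmas (and for building $X$), not for this final step; the paper's induction, by contrast, fixes the endpoints in index order, which is what its closing squeeze (an arbitrary pair $a_m<t<a_n$) is phrased to use, whereas you squeeze each $x\in A_n$ between the adjacent fixed points $b_n$ and $a_m$ — the same idea. One cosmetic remark: your second step only treats $x\in\bigcup_n A_n$, which is fine since the block points are already fixed by the first step, but it is worth saying so explicitly, as you do.
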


\begin{proof}
  Fix a bijection $f\colon X\to X$ which is non-expansive.
  By Lemma \ref{lem4}, for every $n<\omega$ the restriction  $f|_{[a_n,b_n]\cap X}$ is an isometry onto $[a_n,b_n]\cap X$.
  If $f(b_0)=a_0$ and $f(b_1)=a_1$, then
  $$|f(b_1)-f(b_0)|=a_1-a_0= (a_1-b_0)+(b_0-a_0)>(a_1-b_0)+(b_1-a_1)=b_1-b_0;$$
a contradiction.
    If $f(b_0)=a_0$ and $f(b_1)=b_1$, then
  $$|f(b_1)-f(b_0)|=b_1-a_0>b_1-b_0;$$
  a contradiction.
  Thus $f(b_0)=b_0$, hence $f(a_0)=a_0$.
    Fix $1\leq n<\omega$ and assume that $f(a_k)=a_k$ and $f(b_k)=b_k$ for $k<n$.
    Suppose that $f(b_{n+1})=a_{n+1}$.
    If $a_n<a_{n+1}$, then $b_n<a_{n+1}$ and
    $$b_{n+1}-b_n=f(a_{n+1})-f(b_n)\leq a_{n+1}-b_n<b_{n+1}-b_n;$$
    a contradiction.
    If $a_{n+1}<a_n$, then $b_{n+1}<a_n$ and
    $$a_{n}-a_{n+1}=f(a_n)-f(b_{n+1})\leq a_n-b_{n+1}<a_{n+1}-a_n;$$
    a contradiction.
    Thus $f(b_{n+1})=b_{n+1}$, hence $f(a_{n+1})=a_{n+1}$.
    By induction, $f(a_n)=a_n$ and $f(b_n)=b_n$ for every $n<\omega$.

  Fix $t\in X$.
  There exist $m$ and $n$ such that $a_m<t<a_n$.
  Once again, $t$ is the only solution of the system of inequalities
     $$(***)\quad\left\{\begin{array}{rcl}
       |a_m-x|&\leq& |a_m-t|,\\
       |a_n-x|&\leq& |a_n-t|,\\
     \end{array}\right.$$
 On the other hand, since $f$ is non-expansive, we have 
  $$(**)\quad\left\{\begin{array}{rcl}
  |f(a_m)-f(t)|&\leq& |a_m-t|,\\
  |f(a_n)-f(t)|&\leq& |a_n-t|.\\
  \end{array}\right.$$
Since $f(a_m)=a_m$ and $f(a_n)=a_n$, the value $f(t)$ is the only solution of $(***)$, hence $f(t)=t$.
\end{proof}
 
\begin{thm}
The  Continuum Hypothesis is equivalent to the non-existence of a plastic dense subset  $X\subseteq\bbR$ of cardinality $|X|<\frak{c}$.
\end{thm}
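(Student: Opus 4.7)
The plan is to prove this equivalence by splitting it into two implications and showing each is a direct consequence of material already at hand.

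First I would handle the forward implication: assume the Continuum Hypothesis and show that every dense subset $X\subseteq\bbR$ with $|X|<\frak{c}$ fails to be plastic. Under CH we have $\frak{c}=\omega_1$, so $|X|<\frak{c}$ forces $X$ to be countable. The result cited from \cite{Banakh} in the introduction states that every countable dense subspace of a normed space is not plastic; applied to $X\subseteq\bbR$ this immediately gives the conclusion. This direction requires no new construction, only the invocation of the quoted theorem.

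For the reverse implication I would prove the contrapositive: if CH fails then a dense plastic subset $X\subseteq\bbR$ of cardinality $<\frak{c}$ exists. The negation of CH is exactly $\frak{c}\geq\omega_2$, which is precisely the hypothesis under which the paper's construction operates. The previous Theorem shows that the set $X$ built in Section 2 is plastic, is dense in $\bbR$ by conditions (i) and (v) on the family $\calR$, and has cardinality $\omega_1$ (each interval $[a_n,b_n]\cap\bbQ$ is countable, each $A_n$ has cardinality $\omega_1$, and the union is a countable union of such sets). Since $\omega_1<\omega_2\leq\frak{c}$, this $X$ witnesses the desired existence.

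Combining the two implications yields the stated equivalence. I do not anticipate any substantive obstacle: the forward direction is a one-line deduction from a cited lemma, and the reverse direction is an immediate repackaging of the main theorem together with the cardinality computation for $X$. The only bookkeeping needed is to verify explicitly that the $X$ produced by the construction has cardinality exactly $\omega_1$ and is dense, both of which are transparent from the definition.
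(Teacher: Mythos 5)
Your proposal is correct and follows essentially the same route as the paper: the forward direction invokes the cited result of Banakh, Mazurenko and Zavarzina on countable dense subsets (under CH, $|X|<\frak{c}$ forces countability), and the reverse direction notes that $\neg$CH is exactly $\frak{c}\geq\omega_2$, under which the constructed set $X$ is a dense plastic set of cardinality $\omega_1<\frak{c}$. Your additional bookkeeping on density and the cardinality of $X$ is a harmless elaboration of what the paper states implicitly.
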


\begin{proof}
 The Continuum Hypothesis and  Corollary 2 from \cite{Banakh} imply that there is no plastic dense subset $X\subseteq \bbR$ of cardinality $|X|<\frak{c}$.

If we assume that $\frak{c}\geq\omega_2$, then the set $X$ is plastic, dense and is of cardinality $\omega_1<\frak{c}$.
\end{proof}


\begin{thebibliography}{99}
\bibitem{Banakh} T. Banakh, O. Mazurenko, O. Zavarzina, \emph{Plastic metric spaces and groups}, \verb+https://arxiv.org/pdf/2510.10537+
\end{thebibliography}
\end{document}